\theoremstyle{plain}
\newtheorem{theorem}{\bf Theorem}[section]
\newtheorem{proposition}[theorem]{\bf Proposition}
\newtheorem{lemma}[theorem]{\bf Lemma}
\theoremstyle{definition}
\newcommand{\N}{\mathbb N}
\newcommand{\DP}{\negthinspace : \negthinspace}
\renewcommand{\P}{\mathbb P}
\begin{document}


\title{The critical number of finite abelian groups}




\author{Michael Freeze and Weidong Gao and Alfred Geroldinger}

\thanks{}

%
%

\keywords{critical number, finite abelian groups, sumsets}

\subjclass[2000]{11P70, 11B50, 11B75}

\begin{abstract}
Let G be an additive, finite abelian group. The critical number
$\mathsf{cr}(G)$ of $G$ is the smallest positive integer $\ell$
such that for every subset $S \subset G \setminus \{0\}$ with $|S|
\ge \ell$ the following holds:  Every element of $G$ can be written
as a nonempty sum of distinct elements from $S$. The critical number
was first studied by P. Erd\H{o}s and H. Heilbronn in 1964, and due
to the contributions of many authors the value of $\mathsf {cr}(G)$
is known for all finite abelian groups $G$ except for $G \cong
\mathbb{Z}/pq\mathbb{Z}$ where $p,q$ are primes such that
$p+\lfloor2\sqrt{p-2}\rfloor+1<q<2p$.  We determine that $\mathsf
{cr}(G)=p+q-2$ for such groups.
\end{abstract}


\maketitle

\bigskip
\section{Introduction and Main Results} \label{1}
\bigskip

Let $G$ be  an additive, finite abelian group. The critical number
$\mathsf {cr}(G)$ of $G$ is the smallest positive integer $\ell$
such that  every subset $S \subset G \setminus \{0\}$ with $|S| \ge
\ell$ has the following property: every element of $G$ can be
written as a nonempty sum of distinct elements from $S$.

The critical number was first studied by P. Erd\H{o}s and H.
Heilbronn (see \cite{Er-He64}) for cyclic groups of prime order in
1964. After main contributions by H.B. Mann, J.E. Olson, G.T.
Diderrich, Y.F. Wou, J.A. Dias da Silva, Y. ould Hamidoune and W.
Gao, the precise value of $\mathsf {cr} (G)$ (in terms of the group
invariants of $G$) was determined, apart from cyclic groups of order
$pq$ where $p$ and $q$ are primes with $p + \lfloor 2\sqrt{p-2}
\rfloor +1 < q < 2p$. We settle this remaining case in the following
Theorem \ref{originalResult}. Its proof is based on ideas of G.T.
Diderrich (developed in his work on cyclic groups of order $pq$) and
on the solution of the Erd\H{o}s-Heilbronn Conjecture by J.A. Dias
da Silva and Y. ould Hamidoune.

\medskip
\begin{theorem}\label{originalResult}
Let $G$ be a cyclic group of order $pq$ where $p,q$ are primes with
$p + \lfloor 2\sqrt{p-2} \rfloor +1 < q < 2p.$ Then $\mathsf
{cr}(G)=p+q-2$.
\end{theorem}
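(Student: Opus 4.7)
The plan is to prove the two bounds separately. For the lower bound $\mathsf{cr}(G)\ge p+q-2$, I will exhibit an $S\subset G\setminus\{0\}$ of cardinality $p+q-3$ from which some element of $G$ is unreachable. Let $H\le G$ be the subgroup of order $p$, let $\pi:G\to G/H$ be the canonical map, and pick $g\in G$ with $\pi(g)$ of order $q$. Take
\[
S = (H\setminus\{0\}) \cup \{g+h_1,\dots,g+h_{q-2}\}
\]
for distinct $h_1,\dots,h_{q-2}\in H$. Any nonempty $T\subset S$ uses $k$ elements from $g+H$ with $0\le k\le q-2$, so $\pi(\sum T)=k\pi(g)\ne -\pi(g)$. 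No element of $-g+H$ is a subset sum, proving $\mathsf{cr}(G)>p+q-3$.

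For the upper bound, fix $S\subset G\setminus\{0\}$ with $|S|=p+q-2$ and an arbitrary target $c\in G$. Decompose $S=\bigsqcup_{i\in\pi(S)}S_i$ with $S_i=S\cap\pi^{-1}(i)$ and $s_i=|S_i|$. For any subset $T\subset S$ with $|T\cap S_i|=k_i$, we have $\pi(\sum T)=\sum_i k_i\cdot i$ in $G/H$. I will realise $c$ as a subset sum in two stages: first choose admissible multiplicities $\{k_i\}$ (with $0\le k_i\le s_i$) so that $\sum_i k_i\cdot i=\pi(c)$ in $G/H$; then select specific elements from each $S_i$, together with a subset of $S_0$, so the $H$-component of the total sum equals that of $c$.

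The first stage is driven by the Dias da Silva--Hamidoune bound $|k^{\wedge}A|\ge\min(q,k(|A|-k)+1)$ applied to $A=\pi(S)\setminus\{0\}\subset G/H\cong\mathbb{Z}/q\mathbb{Z}$; varying $k$ shows that all of $G/H$ is reached as soon as $|A|$ is moderately large. The second stage relies on an analogous restricted sumset bound inside $H\cong\mathbb{Z}/p\mathbb{Z}$: once $\{k_i\}$ is fixed, varying the specific elements chosen in each $S_i$ and the subset of $S_0$ sweeps the $H$-component of $\sum T$ through a restricted sumset in $H$. When $\pi(S)$ spreads across many cosets the argument is routine, so the bulk of the work lies in the \emph{concentrated} case in which $S$ is essentially supported on $S_0$ together with at most one or two nontrivial cosets; here the quotient DSH bound is too weak and one must work directly inside $H$ via Diderrich's subset-sum techniques for groups of order $pq$. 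The quantitative hypothesis $q>p+\lfloor 2\sqrt{p-2}\rfloor+1$ enters precisely to force $|S_0|$ large enough in this concentrated regime so that subset sums of $S_0$ cover all of $H$; the quantity $\lfloor 2\sqrt{p-2}\rfloor$ reflects the critical number of $\mathbb{Z}/p\mathbb{Z}$. Combining the two stages shows every $c\in G$ is a subset sum of $S$.
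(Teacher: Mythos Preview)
Your lower-bound construction is broken as written: with $|H|=p$ you ask for $q-2$ distinct elements $g+h_1,\dots,g+h_{q-2}$ in the single coset $g+H$, but $|g+H|=p<q-2$ under the hypothesis $q>p+\lfloor 2\sqrt{p-2}\rfloor+1$. This is easily repaired by swapping the roles (take the subgroup of order $q$ and place $p-2$ elements in one nontrivial coset), but the same swap is what you are missing in the upper bound, and there it is not cosmetic.

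The paper takes $H$ of index $p$ (so $|H|=q$ and $G/H\cong\mathbb Z/p\mathbb Z$), which forces $s:=|\{a+H:a\in S\setminus H\}|\le p-1$. Your choice $|H|=p$ allows $s$ as large as $q-1$, and this is exactly what wrecks the key step. The two-stage scheme you outline is the right shape, but the entire difficulty lies in choosing the multiplicities $k_i$ so that few of them are \emph{collapsed} (equal to $0$ or $s_i$): a collapsed $k_i$ contributes only a singleton $\Sigma_{k_i}(S_i)$ to the second-stage sumset, and the Cauchy--Davenport bound there becomes $\sum_i|S_i|-C-s$ with $C$ the total collapse. The paper controls $C$ by a Diderrich-type estimate in $G/H\cong\mathbb Z/p\mathbb Z$: assuming the relevant sumset misses a point, one obtains after substitution an inequality which, combined with $s\le p-1$, forces $q\le p+1+\lfloor 2\sqrt{p-2}\rfloor$, contradicting the hypothesis. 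In your setup the identical computation yields only $s\ge p-2-|S_0|+2r$, which says nothing since $s$ can reach $q-1$.

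Consequently your account of where the hypothesis enters---``forcing $|S_0|$ large in the concentrated regime''---handles only the case of at most two nontrivial cosets, and your assertion that the spread case is ``routine'' is precisely the missing argument. (Incidentally, ``Diderrich's subset-sum techniques for groups of order $pq$'' in \cite{Di75} treat $q\ge 2p+1$, not the present range $q<2p$, so that reference does not close the gap either.) The paper's actual case division is on $|S_0|$ and then on $|S_1|$, with the hypothesis used inside the collapse-control lemmas in the quotient of order $p$; you should redo the argument with $H$ of order $q$.
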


\medskip
We consequently have the following determination of the value of the
critical number for all finite abelian groups. Apart from Theorem
\ref{originalResult}, it is based on the fundamental work of many
authors, and at the end of Section \ref{2} we will provide detailed
references to all contributions. Note that, by definition, $|G| \le
2$ implies that $\mathsf {cr} (G) = |G|$.

\medskip
\begin{theorem}\label{maintheorem}
Let $G$ be a finite abelian group of order $|G| \ge 3$, and let $p$
denote the smallest prime divisor of $|G|$.
\begin{enumerate}
\item If $|G|=p$, then $\mathsf {cr}(G)= \lfloor 2 \sqrt{p-2} \rfloor$.

\smallskip
\item In each of the following cases we have $\mathsf{cr} (G) =
      \frac{|G|}p + p-1${\rm \,:}
      \begin{itemize}
      \item $G$ is isomorphic to one of the following groups{\rm \,:} $C_3
            \oplus C_3$, $C_2 \oplus C_2$, $C_4$, $C_6$, $C_2 \oplus C_4$,
            $C_8$.

      \item $|G|/p$ is an odd prime with $2 < p < \frac{|G|}{p} \le p
            + \lfloor 2 \sqrt{p-2} \rfloor + 1$.
      \end{itemize}

\smallskip
\item In all other cases we have $\mathsf{cr} (G) =
      \frac{|G|}p + p-2$.
\end{enumerate}
\end{theorem}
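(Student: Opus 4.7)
The plan is to derive Theorem \ref{maintheorem} by combining Theorem \ref{originalResult} with the prior contributions of Erd\H{o}s-Heilbronn, Mann, Olson, Diderrich, Wou, Dias da Silva-Hamidoune, and Gao. Since each of these earlier results determines $\mathsf{cr}(G)$ under specific hypotheses on $G$, the proof is essentially a case distinction coupled with a verification that the exceptional list in Part (2) is exhaustive; no new ingredient beyond what already enters Theorem \ref{originalResult} should be needed.

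Part (1) concerns $|G|=p$ prime. The lower bound $\mathsf{cr}(G) \ge \lfloor 2\sqrt{p-2}\rfloor$ comes from the explicit subset construction of Erd\H{o}s and Heilbronn, while the matching upper bound is precisely the Dias da Silva-Hamidoune theorem on $h$-fold restricted sumsets. I would cite both and observe that they jointly yield the stated value.

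For Parts (2) and (3) I would partition the groups of composite order into three subfamilies and match each to an existing theorem: (a) cyclic groups $C_{pq}$ with primes $p \le q$; (b) other cyclic groups; and (c) non-cyclic groups. Within (a), for $p<q$ we further distinguish the ranges $p<q \le p+\lfloor 2\sqrt{p-2}\rfloor+1$ (the "odd-prime" exception of (2), where Diderrich established $\mathsf{cr}(G)=p+q-1$), the new window $p+\lfloor 2\sqrt{p-2}\rfloor+1<q<2p$ handled by Theorem \ref{originalResult}, and $q \ge 2p$ handled by Diderrich; the latter two ranges give $\mathsf{cr}(G)=p+q-2$. Within (b) and (c), the results of Mann-Wou, Olson, and Gao furnish $\mathsf{cr}(G) = |G|/p + p - 2$ away from the explicit short list $C_3\oplus C_3$, $C_2 \oplus C_2$, $C_4$, $C_6$, $C_2 \oplus C_4$, $C_8$ of exceptional small groups, for each of which a direct computation (or the cited source) gives $\mathsf{cr}(G)=|G|/p+p-1$.

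The principal obstacle is bookkeeping rather than fresh mathematics: I must confirm that the exceptional list in Part (2) coincides exactly with the union of those small groups where the Olson and Gao bounds are tight at $|G|/p+p-1$ and the narrow "small-$q$" regime for $C_{pq}$ where Diderrich already proved $\mathsf{cr}(C_{pq}) = p+q-1$. Once this matching is verified group by group and the appropriate reference is attached to each case, the three clauses of Theorem \ref{maintheorem} follow directly.
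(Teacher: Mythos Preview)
Your approach is essentially the same as the paper's: a case-by-case compilation of prior results with Theorem~\ref{originalResult} filling the remaining gap for $C_{pq}$. The paper organises the cases slightly differently---prime order, $C_p\oplus C_p$, $C_{pq}$ with $p<q$, even order (Diderrich--Mann), and odd order with composite $|G|/p$ (Gao--Hamidoune)---rather than by cyclic versus non-cyclic, but the content is the same.

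Two attribution corrections are worth noting. First, the small-$q$ exception $p<q\le p+\lfloor 2\sqrt{p-2}\rfloor+1$ (where $\mathsf{cr}(C_{pq})=p+q-1$) is due to Griggs, not Diderrich; Diderrich's 1975 paper handles only the large-$q$ range $q\ge 2p+1$. Second, the even-order case is settled by Diderrich--Mann (your case~(b) and~(c) should route all even-order groups, including the small exceptions $C_4,C_6,C_8,C_2\oplus C_2,C_2\oplus C_4$, through that reference rather than through Olson or Gao). These are bibliographic rather than mathematical issues; the overall strategy is correct.
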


\medskip
The work on the precise value of the critical number is complemented
by investigations on the structure of sets $S \subset G \setminus
\{0\}$ with $|S| \le \mathsf {cr} (G)$ and which have the property
that every group element can be written as a nonempty sum of
distinct elements from $S$. We refer to recent work of Y. ould
Hamidoune, A.S. Llad{\'o} and O. Serra, see \cite{G-H-L-S03} and
\cite{Ha-Ll-Se08a}.

\bigskip
\centerline{\it Throughout this article, let $G$ be an additively
written, finite abelian group.}

\bigskip
\section{Notation and tools from Additive Group Theory} \label{2}
\bigskip

Let $\mathbb N$ denote the set of positive integers, $\P \subset \N$
the set of prime numbers, and let $\mathbb N_0 = \mathbb N \cup \{ 0
\}$. For real numbers $a, b \in \mathbb R$, we set $[a, b] = \{ x
\in \mathbb Z \mid a \le x \le b\}$.  For $n \in \mathbb N$, let
$C_n$ denote a cyclic group with $n$ elements. Throughout, all
abelian groups will be written additively.

Let $A, B \subset G$ be nonempty  subsets. Then  $A+B = \{ a+b \mid
a \in A, b \in B\}$ denotes their {\it sumset}.  The set $A$ is
called an {\it arithmetic progression with difference} $d \in G$ if
there is some $a \in G$ such that $A = \{ a+ \nu d \mid \nu \in [0,
|A|-1] \}$. If $A = \{a_1,\ldots,a_{\ell}\}$ and $k\in \mathbb{N},$
we denote the restricted sumset by
\[
\Sigma_{k}(A) = \{ \sum_{i\in I}a_i \mid I \subset [1,\ell] \textrm{
with } |I|=k\} \quad \text {and write} \quad  \Sigma(A) =
\displaystyle \bigcup_{k \ge 1} \Sigma_{k}(A) \,.
\]
In particular, $A = \emptyset$ if and only if $\Sigma (A) =
\emptyset$, and for convenience we set $\Sigma_0 (A) = \{0\}$. Thus
in more technical terms, the {\it critical number} $\mathsf {cr}
(G)$ is the smallest integer $\ell \in \mathbb N$ such that every
subset $S \subset G \setminus \{0\}$ with $|S| \ge \ell$ satisfies
$\Sigma (S) = G$.

\smallskip
Now we provide the background necessary to prove Theorem
\ref{originalResult}. We start with the classical addition theorem
of Cauchy-Davenport (see \cite[Corollary 5.2.8]{Ge-HK06a}).

\medskip
\begin{theorem}\label{CDext}$($Cauchy-Davenport$)$
Let $G$ be  prime cyclic  of order $p,$ $s \in \mathbb{N}_{\ge 2},$
and $A_1,\ldots,A_{s} \subset G$ nonempty subsets. Then
\[
|A_{1} + \ldots + A_{s}|\ge \min \Bigl\{ p,
\sum_{i=1}^{s}|A_{i}|-s+1 \Bigr\} \,.
\]
\end{theorem}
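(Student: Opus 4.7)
The plan is to prove the statement by induction on $s$, reducing everything to the classical two-summand Cauchy-Davenport inequality and then peeling off one summand at a time.

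For the base case $s=2$, I would argue by contradiction. Suppose there exist nonempty $A_1, A_2 \subset G$ with $|A_1 + A_2| < \min\{p, |A_1|+|A_2|-1\}$, and among all such counterexamples pick one with $|A_2|$ minimal. Since the result is trivial when $|A_2|=1$, one has $|A_2| \ge 2$. The key tool is the Davenport $e$-transform: for $e \in A_2 - A_1$, set
\[
A_1^{(e)} = A_1 \cup (e + A_2), \qquad A_2^{(e)} = A_2 \cap (A_1 - e) + e.
\]
A straightforward verification shows $|A_1^{(e)}| + |A_2^{(e)}| = |A_1| + |A_2|$ and $A_1^{(e)} + A_2^{(e)} \subset A_1 + A_2$. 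Because $|A_1+A_2| < p$ means $A_1 + A_2 \neq G$, one can choose $e$ so that $|A_2^{(e)}| < |A_2|$; otherwise $(A_1 - a) \supset A_2$ for every $a \in A_1$, forcing $A_1 + A_2$ to be a union of cosets of the subgroup generated by $A_1 - A_1$, which in the prime-order group $G$ is all of $G$, a contradiction. The pair $(A_1^{(e)}, A_2^{(e)})$ is then a smaller counterexample, contradicting minimality.

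For the inductive step, assuming the inequality for $s-1$ summands, I apply the two-summand case to $B := A_1 + \ldots + A_{s-1}$ and $A_s$:
\[
|A_1 + \ldots + A_s| \ge \min\bigl\{p,\; |B| + |A_s| - 1\bigr\}.
\]
If $|B| = p$ there is nothing to prove, otherwise $|B| \ge \sum_{i=1}^{s-1}|A_i| - (s-1) + 1$ by the inductive hypothesis, and substituting yields the claim.

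The only real obstacle is the base case $s=2$: one must justify that the $e$-transform can always be chosen to strictly decrease $|A_2|$ whenever $A_1 + A_2 \neq G$. This is exactly the point where primality of $|G|$ enters — it guarantees that no proper nontrivial subgroup is available to absorb the stabilizer of $A_1 + A_2$. Once that step is set up carefully, the rest is a clean induction.
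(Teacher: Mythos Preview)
The paper does not supply its own proof of this theorem; it simply quotes the Cauchy--Davenport inequality from the literature (with a reference to \cite[Corollary~5.2.8]{Ge-HK06a}). So there is no ``paper's proof'' to compare against, and any self-contained argument you give goes beyond what the paper does.

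Your overall strategy --- the classical Dyson/Davenport $e$-transform for $s=2$ followed by a straightforward induction peeling off one summand --- is the standard route and is sound. The inductive step is clean and correct as written.

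In the base case, however, your formulas for the transform are not quite right and, as stated, do not verify the two key properties. With your definition $A_2^{(e)} = \bigl(A_2 \cap (A_1 - e)\bigr) + e$, the set $A_2^{(e)}$ lands inside $A_1$, and one cannot check $A_1^{(e)} + A_2^{(e)} \subset A_1 + A_2$ in the case $x \in A_1$, $y \in A_2^{(e)}$. The usual (and working) transform drops the trailing $+e$: take
\[
A_1^{(e)} = A_1 \cup (A_2 + e), \qquad A_2^{(e)} = A_2 \cap (A_1 - e),
\]
and require $e \in A_1 - A_2$ (not $A_2 - A_1$) so that $A_2^{(e)} \neq \emptyset$. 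With these corrections the size identity and the containment $A_1^{(e)} + A_2^{(e)} \subset A_1 + A_2$ are immediate. Likewise, in the ``no strict decrease'' branch the relevant stabilising subgroup is the one generated by $A_2 - A_2$ (not $A_1 - A_1$): if $A_2 + e \subset A_1$ for every $e \in A_1 - A_2$, then $A_1 + (A_2 - b) \subset A_1$ for any fixed $b \in A_2$, so $A_1$ is a union of cosets of $\langle A_2 - A_2\rangle$, which is all of $G$ by primality since $|A_2| \ge 2$. Once these slips are fixed, your argument goes through.
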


\medskip
In Theorem 2.1 of \cite{Di75} and a following remark, G.T. Diderrich
improved the Cauchy-Davenport bound under extra structure
assumptions on $A_1,\ldots,A_s$.

\medskip
\begin{theorem}\label{DidVerCD}$($Diderrich$)$
Let $G$ be  prime cyclic  of order $p,$ $s\in\mathbb{N}_{\ge 2}$ and
$A_1,\ldots,A_{s} \subset G$ nonempty subsets such that all subsets,
apart from one possible exception, are arithmetic progressions with
pairwise distinct nonzero differences.  Then
\[
|A_1 + \ldots + A_s|\ \ge \min \Bigl\{p, \sum_{i=1}^{s}|A_{i}|-1
\Bigr\} \,.
\]
\end{theorem}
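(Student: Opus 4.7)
The plan is to proceed by induction on $s$. The base case $s=2$ coincides with Theorem \ref{CDext}, since the bound $|A_1|+|A_2|-1$ there is exactly what is claimed here at $s=2$.

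For the inductive step with $s \ge 3$, I would assume without loss of generality that $A_s$ is the possibly exceptional set, so that $A_1, \ldots, A_{s-1}$ are arithmetic progressions of length at least two with pairwise distinct nonzero differences $d_1, \ldots, d_{s-1}$. The core of the proof is a secondary induction on $k$ establishing the stronger auxiliary estimate
\[
|A_1 + \cdots + A_k| \;\ge\; \min\Bigl\{p,\; \sum_{i=1}^{k}|A_i|\Bigr\} \quad \text{for every } k \in [1, s-1].
\]
Once this is in place, one application of Theorem \ref{CDext} to $(A_1 + \cdots + A_{s-1}) + A_s$ delivers the claimed bound $\sum_{i=1}^{s}|A_i| - 1$, and when $|A_s|=1$ the auxiliary estimate already supplies it directly by translation.

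For the secondary step $k-1 \to k$, setting $X = A_1 + \cdots + A_{k-1}$, I would apply Vosper's theorem to the pair $(X, A_k)$. Whenever $|X|, |A_k| \ge 2$ and $|X+A_k| \le p-2$, equality $|X + A_k| = |X|+|A_k|-1$ in Cauchy-Davenport would force $X$ and $A_k$ to be arithmetic progressions sharing a common difference, necessarily $\pm d_k$. The crux is thus the structural claim that $X$ cannot be an AP with common difference $\pm d_k$. I plan to establish the stronger rigidity: if $X$ is an arithmetic progression at all, then its common difference agrees with $\pm d_j$ for some $j < k$. After normalizing so that $X = \{0, d, 2d, \ldots, (|X|-1)d\}$ and writing each $d_j = m_j d$ with $m_j \in [1, |X|-1]$, the fact that $1 \cdot d \in X$ must arise in the generating expression $\{\sum_j i_j d_j : 0 \le i_j < |A_j|\}$ forces some $m_j = 1$, i.e.\ $d = d_j$; the pairwise distinctness of the differences then excludes $d = \pm d_k$.

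The main obstacle I anticipate is this structural rigidity together with the boundary case $|X + A_k| = p-1$, where Vosper does not directly apply. The case $|X + A_k| = p$ trivially terminates the argument. The case $|X + A_k| = p-1$ must be excluded by a direct count showing that distinct nonzero differences already force the sum to saturate to $\mathbb Z/p\mathbb Z$ whenever $\sum_{i \le k}|A_i|$ approaches $p$, which is plausible from small examples but requires careful verification uniformly in $k$. Assembling the rigidity lemma with these saturation estimates completes the secondary induction, and hence the theorem.
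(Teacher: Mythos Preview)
The paper does not give its own proof of this theorem; it is quoted from Diderrich's article \cite{Di75} (his Theorem~2.1 together with the remark following it) and then used as a black box. So there is no in-paper argument to compare against, and your proposal must be judged on its own merits.

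Your overall plan---prove the sharper auxiliary bound $|A_1+\cdots+A_k|\ge\min\{p,\sum_{i\le k}|A_i|\}$ for the non-exceptional progressions by induction via Vosper, and then absorb the exceptional $A_s$ with one Cauchy--Davenport step---is the natural route and is in the spirit of Diderrich's argument. The reduction is correct: once the auxiliary bound is established, the theorem follows immediately.

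There are, however, two genuine gaps in the proposal as it stands. First, your rigidity sketch does not work as written. From $d\in X$ you obtain only the congruence $1\equiv\sum_j i_j m_j\pmod p$; you then assert that this forces some $m_j=1$. But the equality is only modulo $p$, and the integer $\sum_j i_j m_j$ may well be $1+p$ or larger (each $m_j$ can be as big as $|X|-1$ and each $i_j$ as big as $|A_j|-1$), so nothing pins down any individual $m_j$. A correct rigidity argument must proceed differently: for instance, observe that for each $j<k$ a translate of $A_j$ sits inside $X$, so $d_j$ occurs as a difference inside the AP $X$ of common difference $d$; combining this with a careful size count, or strengthening the inductive hypothesis to record the difference of $X$ in the equality case, yields $d\in\{\pm d_1,\ldots,\pm d_{k-1}\}$. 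Your one-line shortcut does not.

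Second, the boundary case $|X+A_k|=p-1$ cannot be dismissed by a vague ``direct count''. What actually closes it is elementary but needs to be said: if $|X|+|A_k|=p$ and $c$ is the unique element missing from $X+A_k$, then $X\cap(c-A_k)=\emptyset$, and comparing cardinalities gives $X=c-(G\setminus A_k)$. Since the complement of an arithmetic progression in $\mathbb Z/p\mathbb Z$ is again an arithmetic progression with the \emph{same} common difference, this forces $X$ to be an AP with difference $\pm d_k$, and you are back in the Vosper situation handled by the rigidity step. With these two repairs the induction goes through; without them the argument is incomplete.
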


\medskip
The  Theorem of Dias da Silva and Hamidoune settled the
Erd\H{o}s-Heilbronn Conjecture on restricted sumsets (see
\cite{Di-Ha94} for the original paper, and also  \cite[Theorems 3.4
and 3.8]{Na96b}).

\medskip
\begin{theorem}\label{DdSHam}$($Dias da Silva-Hamidoune$)$
Let $G$ be  prime cyclic of order $p$, $S \subset G$ a subset and $k
\in [1,|S|]$.
\begin{enumerate}
\item $|\Sigma_{k}(S)| \ge \min \{p,k(|S|-k)+1 \}$.

\smallskip
\item If $|S| = \lfloor \sqrt{4p-7} \rfloor$ and $k = \lfloor |S|/2 \rfloor,$ then $\Sigma_{k}(S)=G.$
\end{enumerate}
\end{theorem}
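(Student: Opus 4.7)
The plan is to prove part (1) by Alon's polynomial method, following Alon--Nathanson--Ruzsa, and then to extract part (2) from (1) with the prescribed choice $k = \lfloor |S|/2\rfloor$. The original proof of Dias da Silva and Hamidoune arrived at (1) by an exterior-algebra / multilinear argument; the resulting content is the same.

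For part (1), write $n = |S|$, enumerate $S = \{s_1,\ldots,s_n\} \subset \F_p$, and argue by contradiction from $|\Sigma_k(S)| \le \min\{p-1, k(n-k)\}$. Extend $\Sigma_k(S)$ to a set $E \subseteq \F_p$ with $|E| = k(n-k)$ (possible since $k(n-k) < p$) and consider
$$f(x_1,\ldots,x_k) \;:=\; \prod_{1 \le i < j \le k}(x_j - x_i) \cdot \prod_{e \in E}(x_1 + \cdots + x_k - e) \in \F_p[x_1,\ldots,x_k],$$
of total degree $D := \binom{k}{2} + k(n-k)$. This polynomial vanishes on every point of $S^k$: the Vandermonde factor kills the diagonals, and off the diagonals $s_{i_1}+\cdots+s_{i_k} \in \Sigma_k(S) \subseteq E$ kills the second factor. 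By the Combinatorial Nullstellensatz, a contradiction follows once one finds a monomial $x_1^{c_1}\cdots x_k^{c_k}$ in $f$ of degree $D$, with every $c_i \le n - 1$ and nonzero coefficient in $\F_p$. I would target $x_1^{n-k}x_2^{n-k+1}\cdots x_k^{n-1}$: all exponents lie in $[0,n-1]$ and their sum is $D$, so the required coefficient comes from the top-degree part $V(x) \cdot (x_1+\cdots+x_k)^{k(n-k)}$ of $f$, with $V$ the Vandermonde. Expanding $V$ as a signed sum over the symmetric group, the coefficient collapses to the determinantal expression
$$(k(n-k))! \cdot \det\!\left(\frac{1}{(n-k+i-j)!}\right)_{1 \le i,j \le k},$$
which by a Jacobi--Trudi / hook-length identity evaluates to a positive integer all of whose prime factors are at most $k(n-k) < p$, hence a unit in $\F_p$.

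For part (2), apply part (1) with $k = \lfloor |S|/2\rfloor$, so that $k(|S|-k) = \lfloor |S|^2/4\rfloor$. A short arithmetic check against $|S| = \lfloor\sqrt{4p-7}\rfloor$ (splitting into the parity cases of $|S|$) then gives $\lfloor |S|^2/4 \rfloor + 1 \ge p$, which forces $|\Sigma_k(S)| \ge p$, i.e., $\Sigma_k(S) = G$.

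The main obstacle I anticipate is the coefficient computation in part (1): the identification of the Vandermonde-times-power-of-sum coefficient with the displayed determinant, and the Jacobi--Trudi / hook-length formula that gives its positivity and the ceiling $k(n-k)$ on its prime factors. Once this identity is in hand, nonvanishing modulo $p$ under the hypothesis $k(n-k) < p$ is automatic.
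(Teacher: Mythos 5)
Your part (1) is the standard Alon--Nathanson--Ruzsa polynomial-method proof; the paper itself gives no proof of this theorem (it is quoted from \cite{Di-Ha94} and Nathanson's book), so an independent route is fine, and your outline is essentially correct in the range $k(|S|-k)+1\le p$. The deferred coefficient identity is the known evaluation: with $n=|S|$ and $c_i=n-k+i-1$ the coefficient equals $(k(n-k))!\,\det\bigl(1/(n-k+i-j)!\bigr)=\frac{(k(n-k))!\,\prod_{j=1}^{k-1}j!}{\prod_{i=1}^{k}(n-k+i-1)!}$, the number of standard Young tableaux of a $k\times(n-k)$ rectangle, so positivity and the bound on its prime factors hold as you say. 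The genuine gap in (1) is the case $k(n-k)\ge p$, where the asserted minimum is $p$ and the claim is $\Sigma_k(S)=G$: there is no set $E\subseteq\F_p$ with $|E|=k(n-k)$, so your polynomial cannot be formed, and the parenthetical ``possible since $k(n-k)<p$'' is exactly what fails. You must either take $|E|=p-1$ and a monomial $x_1^{c_1}\cdots x_k^{c_k}$ with distinct exponents $c_i\le n-1$ and $\sum c_i=\binom{k}{2}+p-1$ (such exponents exist because $p-1\le k(n-k)$, and the coefficient $\frac{(p-1)!}{\prod_i c_i!}\prod_{i<j}(c_j-c_i)$ is prime to $p$), or run the ANR argument with nested subsets $A_1\subset\cdots\subset A_k\subseteq S$ of distinct sizes, which allows the degree to be lowered one unit at a time.

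Part (2) is where the proposal breaks down irreparably as written: the ``short arithmetic check'' goes the wrong way. If $n=|S|=\lfloor\sqrt{4p-7}\rfloor$ and $k=\lfloor n/2\rfloor$, then $n^2\le 4p-7$, hence $k(n-k)+1=\lfloor n^2/4\rfloor+1\le\lfloor(4p-7)/4\rfloor+1=p-1$, so part (1) never forces $|\Sigma_k(S)|\ge p$. In fact the statement as printed (which the paper dismisses as ``clearly a special case'' of (1)) is false: for $p=7$ one has $n=4$, $k=2$ and $|\Sigma_2(S)|\le\binom{4}{2}=6<7$; for $p=11$, $S=\{1,2,3,4,5,6\}$, $k=3$, the set $\Sigma_3(S)$ consists of the residues of $6,7,\ldots,15$ and misses $5$. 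The single-$k$ covering statement does follow from (1) when $|S|\ge\lfloor\sqrt{4p-7}\rfloor+1$ (equivalently $|S|^2\ge 4p-6$; note $4p-7$ is never a square for odd $p$), while the assertion for $|S|=\lfloor\sqrt{4p-7}\rfloor$ --- which is what the evaluation $\mathsf{cr}(C_p)=\lfloor 2\sqrt{p-2}\rfloor$ actually needs --- requires the additional argument of Dias da Silva and Hamidoune (Corollary 4.2 of \cite{Di-Ha94}), which uses $0\notin S$ and combines subset sums of more than one cardinality; it cannot be extracted from (1) by the parity computation you propose. So to complete your write-up you must both repair the $\min=p$ case of (1) and either correct the hypothesis of (2) or supply that separate argument.
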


\medskip
Clearly, the second item of \ref{DdSHam} is a special case of the
first item.  Simple calculations show that $k \in [2,|S|-1],$ then
$k(|S|-k)+1 \ge |S|$ whence $|\Sigma_{k}(S)| \ge |S|.$  We use these
observations throughout the paper.

\medskip
For the convenience of the reader we offer a proof of Theorem
\ref{maintheorem} based on Theorem \ref{originalResult} and
on the fundamental work of prior authors, which is scattered in
the literature and for which we offer precise references. Moreover,
we recall the classical example showing that for $|G| > p$ we have
\[
\mathsf {cr} (G) \ge \frac{|G|}{p} + p - 2 \,,  \quad \text{where} \
p \ \text{is the smallest prime divisor of $|G|$} \,.
\]
Let $H \subset G$ be a subgroup with $(G \DP H) = p$. Then there
exist $h_1, \ldots, h_{p-2} \in G \setminus H$ such that $h_1+H =
h_i+H$ for all $i \in [1, p-2]$. Then for $S = (H \setminus \{0\})
\cup \{h_1, \ldots, h_{p-2}\}$ we have $\Sigma (S) \subset H
\cup(h_1+H) \cup \ldots \cup \bigl( (p-2)h_1 + H \bigr)$. This shows
that $|\Sigma (S)| \le (p-1)|H| < |G|$ and thus $\mathsf {cr} (G)
\ge |S| + 1 = |G|/p + p - 2$.

\medskip
\begin{proof}[Proof of Theorem \ref{maintheorem}, based on
\ref{originalResult}] Let $|G| \ge 3$ and $p$ be the smallest prime
divisor of $|G|$.

\smallskip
CASE 1: \, $G$ is cyclic of order $p$.

Note, since $p \ge 3$, we have $\sqrt{4p-7} \notin \mathbb N$ and
thus $\lfloor \sqrt{4p-7} \rfloor = \lfloor \sqrt{4p-8} \rfloor =
\lfloor 2 \sqrt{p-2} \rfloor$. Thus Theorem \ref{DdSHam} by Dias da
Silva and Hamidoune shows that $\lfloor 2 \sqrt{p-2} \rfloor$ is an
upper bound (see \cite[Corollary 4.2]{Di-Ha94} for details), and
simple examples show that the bound is sharp (see \cite[Example
4.2]{Di-Ha94} and \cite[Theorem 7]{Gr01b}).

\smallskip
CASE 2: \, $G = C_p \oplus C_p$ with $p \ge 3$.

H.B. Mann and J.E. Olson showed that $\mathsf {cr} (G) \le 2p-1$
(with equality for $p=3$), and after that  $\mathsf {cr} (G) = 2p-2$
for all $p \ge 5$ was proved by H.B. Mann and Ying Fou Wou (see
\cite{Ma-Wo86}).

\smallskip
CASE 3: \, $G = C_p \oplus C_q$ for a prime $q$ with $3 \le p < q$.

The case $q \le p + \lfloor 2 \sqrt{p-2} \rfloor + 1$ was settled by
J.R. Griggs (see \cite[Theorem 4]{Gr01b}).

The case $p + \lfloor 2\sqrt{p-2} \rfloor +1 < q < 2p$ follows from
the present Theorem \ref{originalResult}.

The case $q \ge 2p+1$ was settled by G.T. Diderrich (see
\cite[Theorem 1.0]{Di75}).

\smallskip
CASE 4: \, $|G|$ is even.

This case was settled by G.T. Diderrich and H.B. Mann in
\cite{Di-Ma73}, see also  \cite[Theorem 5]{Gr01b} for a
self-contained, simplified proof.

\smallskip
CASE 5: \, $|G|$ is odd and $|G|/p$ is composite.

Then $\mathsf {cr} (G) = |G|/p + p -2$ by W. Gao and Y. ould
Hamidoune (see \cite{Ga-Ha99}).
\end{proof}

\bigskip
\section{The setting and the strategy of the proof} \label{3}
\bigskip

First,  we fix our notations which remain valid throughout the rest
of the paper, and then we outline the strategy of the proof of
Theorem \ref{originalResult}.

\smallskip
Let $G$ be  cyclic  of order $pq$ where $p,q$ are primes with
$p+\lfloor2\sqrt{p-2}\rfloor+1<q<2p$ (which implies that $p \ge 7$)
and let $S \subset G \setminus \{0\}$ be a subset with $|S|=p+q-2$.

Let $H, K  \subset G$ be the subgroups with $(G \DP H)=p$ and $(G
\DP K)=q$. Let $s = |\{a+H \in G/H \mid a \in S \setminus H\}|$ and
pick $a_1,\ldots,a_s \in S \setminus H$ such that $|\{a_i+H \mid i
\in [1,s]\}| = s$. We set $S_0 = H \cap S$ and $S_i = (a_i+H) \cap
S$ for all $i \in [1,s]$.

Suppose that $a_1, \ldots, a_s$ and $t,r,n \in \mathbb N_0$ are
chosen in such a way that
\begin{itemize}
\item $\left|S_{1}\right| \geq \ldots \geq \left|S_{t}\right|\geq 3$,

\item $\left|S_{t+1}\right| = \ldots =\left|S_{t+r}\right|=1$ and

\item $\left|S_{t+r+1}\right| = \ldots = \left|S_{t+r+u}\right|=2$.
\end{itemize}
Notice that
\[
s=t+r+u \le p-1 \quad \text{ and} \quad
|S_0|+\sum_{i=1}^{t}|S_i|+r+2u=p+q-2=|S| \,.
\]
For an element $x \in G$ we  consider a representation
\[
x+H = \sum_{i=1}^s f_i (a_i+H) \tag{$*$}
\]
with $f_i \in [0, |S_i|]$ for all $i \in [1,s]$ and $f_1+ \ldots +
f_s > 0$. If $f_i \in \{0, |S_i| \}$, then $f_i$ is called a {\it
collapsed coefficient} and
\[
C(*) = \sum _{i=1, f_{i}\in \left\{0,
\left|S_{i}\right|\right\}}^{s}\left(\left|S_{i}\right|-1\right)
\]
is called the {\it collapse} of the representation $(*)$. We say that
{\it $G/H$ has a representation with collapse $C \in \mathbb N_0$} if
every $x \in G$ has a representation $(*)$ and $C$ is the maximum of
the collapses $C(*)$.

\medskip
The strategy of the proof is as follows. First we settle the very
simple case where $|S_0| \ge \lfloor2\sqrt{q-2}\rfloor$. After
supposing that $|S_0| \le \lfloor2\sqrt{q-2}\rfloor -1 $ we follow
the ideas of G.T. Diderrich and proceed in two steps:

\medskip
\begin{itemize}
\item[{\bf 1.}] First, we show that $G/H$ has a representation with some
      collapse $C \in \N_0$ (see Lemmas \ref{KnMiddleCollapse1},
\ref{largetsmallcollapse}, \ref{KnSmallCollapse1}).

\smallskip
\item[{\bf 2.}] For $x \in G$ and a representation $(*)$  we show that
\[
\left| \left( \Sigma (S_0) \cup \{0\} \right)
+\Sigma_{f_{1}}(S_{1})+...+\Sigma_{f_{s}}(S_{s})\right| \ge q \,.
\]
\end{itemize}

\medskip
\noindent
Suppose that {\bf 1.} and {\bf 2.} are settled. Notice
that
\[
\begin{aligned}
\left(  \Sigma (S_0) \cup \{0\} \right)
+\Sigma_{f_{1}}(S_{1})+...+\Sigma_{f_{s}}(S_{s}) & \subset H +
f_{1}(a_{1}+H)+...+f_{s}(a_{s}+H)
\\
 & =H+\sum_{i=1}^{s}f_{i}(a_{i}+H)=x+H \,.
\end{aligned}
\]
Thus {\bf 2.} implies that we have equality in the above inclusion.
Therefore
\[
x+H = \left( \Sigma (S_0) \cup \{0\}
 \right)+\Sigma_{f_{1}}(S_{1})+...+\Sigma_{f_{s}}(S_{s})\subset \Sigma(S)
\,,
\]
and together with {\bf 1.} we obtain $G = \Sigma (S)$.

\bigskip
\section{Proof of Theorem \ref{originalResult}} \label{4}
\bigskip

We start with a simple special case.

\medskip
\begin{proposition}\label{KnBig}
If $|S_0| \ge \lfloor2\sqrt{q-2}\rfloor$, then $\Sigma (S) = G$.
\end{proposition}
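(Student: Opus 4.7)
The plan is to show $\Sigma(S) = G$ by proving that every coset of $H$ in $G$ lies entirely in $\Sigma(S)$. The hypothesis $|S_0| \ge \lfloor 2\sqrt{q-2} \rfloor$, combined with $S_0 \subset H \setminus \{0\}$ and $|H| = q$ prime, yields via Theorem \ref{DdSHam}(2) applied to a subset of $S_0$ of the critical size (using the identity $\lfloor \sqrt{4q-7} \rfloor = \lfloor 2\sqrt{q-2} \rfloor$ noted in Case 1 of Theorem \ref{maintheorem}) that $\Sigma(S_0) = H$. Thus the trivial coset $H$ is already covered.

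For a nonzero coset $c \in G/H$, the strategy is first to find a coset-level representation $\bar c = \sum_{i=1}^{s} f_i\, \bar a_i$ with $f_i \in [0, |S_i|]$ and not all $f_i = 0$, and then to lift it by choosing any $y_i \in \Sigma_{f_i}(S_i)$ (using the convention $\Sigma_0(S_i) = \{0\}$) and forming $y = y_1 + \ldots + y_s$. Since some $f_i \ge 1$, the sum is nonempty, so $y \in \Sigma(S)$, and by construction $y \in c$. Because $y$ uses only elements of $S_1 \cup \ldots \cup S_s$, any subset of $S_0$ may be adjoined, so
\[
c \;=\; y + H \;=\; y + \bigl( \Sigma(S_0) \cup \{0\} \bigr) \;\subset\; \Sigma(S),
\]
using $\Sigma(S_0) = H$ from the first paragraph.

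The heart of the matter is therefore producing the coset-level representation, i.e.\ showing $A_1 + \ldots + A_s = G/H$, where $A_i = \{0, \bar a_i, 2\bar a_i, \ldots, |S_i|\bar a_i\} \subset G/H \cong C_p$. Each $A_i$ is either all of $G/H$ (in which case nothing remains to prove) or a proper arithmetic progression of length $|S_i|+1$. In the latter case, Theorem \ref{CDext} gives
\[
|A_1 + \ldots + A_s| \;\ge\; \min\Bigl\{p,\ \sum_{i=1}^s (|S_i|+1) - s + 1\Bigr\} \;=\; \min\{p,\ |S \setminus H| + 1\},
\]
and the elementary estimate $|S \setminus H| = p + q - 2 - |S_0| \ge p - 1$ (from $|S_0| \le q-1$) makes this equal to $p$, so the Minkowski sum fills $G/H$.

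No real obstacle arises here: each ingredient is an immediate appeal to an already-stated theorem, and the arithmetic is tight by exactly one unit, which is why the outline of Section \ref{3} classifies this case as very simple compared to the complementary regime $|S_0| \le \lfloor 2\sqrt{q-2} \rfloor - 1$ that occupies the rest of the paper.
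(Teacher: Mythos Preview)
Your argument is correct and follows the same two-step skeleton as the paper: first use the Dias da Silva--Hamidoune bound to get $\Sigma(S_0)=H$, then use Cauchy--Davenport in $G/H$ together with $|S\setminus H|\ge p-1$ to hit every coset, and finally translate by $H$.

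The only difference from the paper is in the packaging of the second step. The paper simply picks any $p-1$ elements $b_1,\ldots,b_{p-1}\in S\setminus H$, sets $W_i=\{0+H,\,b_i+H\}$, and applies Cauchy--Davenport to these two-element sets to get $W_1+\cdots+W_{p-1}=G/H$; this avoids any discussion of arithmetic progressions or of whether some $A_i$ already equals $G/H$. Your version instead works with the coset blocks $S_1,\ldots,S_s$ and the progressions $A_i=\{0,\bar a_i,\ldots,|S_i|\bar a_i\}$, which is closer in spirit to the ``representation with collapse'' machinery of Section~\ref{3}. Both routes reduce to the same numerical inequality $|S\setminus H|\ge p-1$, so neither is really stronger; the paper's is a line shorter, while yours makes the connection to the later framework more visible.
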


\begin{proof}
Suppose that $|S_0| \ge\lfloor2\sqrt{q-2}\rfloor$.  Since, by
Theorem \ref{maintheorem}.1, $\mathsf
{cr}(H)=\lfloor2\sqrt{q-2}\rfloor$, it follows that $\Sigma (S_0) =
H$. Since $|S \setminus H|\ge p+q-2-(q-1)=p-1$, we can choose $p-1$
distinct elements $b_1, \cdots, b_{p-1} \in S\setminus H$. For $i
\in [1, p-1]$ we set $W_i = \{ 0+H, b_i+H\} \subset G/H$, and by
Theorem \ref{CDext} we obtain that
\[
|\Sigma (W_1+ \ldots + W_{p-1})| \ge \min \{p, 2(p-1)-(p-1)+1\} = p
\,.
\]
Thus it follows that
\[
\Sigma (S) \supset  \Sigma (S_0) + \bigl( \Sigma (\{b_1, \ldots,
b_{p-1}\}) \cup \{0\} \bigr) = G \,. \qedhere
\]
\end{proof}

\medskip
Hence from now on we may assume that  $|S_0| \le \mathsf {cr}(H) -1
$, and we proceed in the two steps described above.

\medskip
\begin{lemma}\label{largetsmallcollapse}
If  $t\ge\lfloor2\sqrt{p-2}\rfloor$, then $G/H$ has  a
representation  with collapse $C=0$.
\end{lemma}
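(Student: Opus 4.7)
The plan is to produce, for each $x \in G$, a representation $(*)$ of $x+H$ whose collapse equals $0$. Set $b_i = a_i + H \in G/H$; these are pairwise distinct and nonzero. First I handle the routine indices: for $i \in [t+1, t+r]$ (where $|S_i|=1$) take $f_i = 0$, which contributes $|S_i|-1 = 0$ to the collapse; for $i \in [t+r+1, t+r+u]$ (where $|S_i|=2$) the unique non-collapsed choice is $f_i = 1$. Writing
\[
y = (x+H) - \sum_{i=t+r+1}^{t+r+u} b_i \in G/H,
\]
the lemma reduces to choosing $f_i \in [1, |S_i|-1]$ for $i \in [1, t]$ with $\sum_{i=1}^{t} f_i b_i = y$, that is, to showing that $\sum_{i=1}^{t} A_i = G/H$, where $A_i = \{k b_i : k \in [1, |S_i|-1]\}$.

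The key step exploits that $|S_i| \ge 3$, so $\{1, 2\} \subset [1, |S_i|-1]$ and hence $A_i \supset \{b_i, 2 b_i\} = b_i + \{0, b_i\}$. Consequently
\[
\sum_{i=1}^{t} A_i \supset \sum_{i=1}^{t} b_i + \sum_{i=1}^{t} \{0, b_i\} = \sum_{i=1}^{t} b_i + \bigl( \Sigma(\{b_1, \ldots, b_t\}) \cup \{0\} \bigr).
\]
Now $T := \{b_1, \ldots, b_t\}$ is a subset of $G/H \setminus \{0\}$ of cardinality $t \ge \lfloor 2\sqrt{p-2} \rfloor = \mathsf{cr}(G/H)$ by Theorem \ref{maintheorem}(1), whose proof rests only on Theorem \ref{DdSHam} and is independent of Theorem \ref{originalResult}. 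Hence $\Sigma(T) = G/H$, so $\sum_{i=1}^{t}\{0, b_i\} = G/H$ and $\sum_{i=1}^{t} A_i = G/H$. The desired $f_i \in [1, |S_i|-1]$ exist, and since $f_i \ge 1$ for every $i \in [1, t]$ the representation $(*)$ is nontrivial and has collapse $0$.

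The main -- and essentially only -- conceptual hurdle is observing that the hypothesis $t \ge \lfloor 2\sqrt{p-2}\rfloor$ is exactly the critical-number threshold for the quotient $G/H \cong C_p$. Once noticed, the reduction above uses only the two values $f_i \in \{1, 2\}$ made available by $|S_i| \ge 3$ and converts a general sumset question into a direct application of the Dias da Silva--Hamidoune theorem.
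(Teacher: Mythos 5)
Your proof is correct and follows essentially the same route as the paper: both arguments use $t \ge \lfloor 2\sqrt{p-2}\rfloor = \mathsf{cr}(G/H)$ (via Theorem \ref{maintheorem}.1, i.e.\ Dias da Silva--Hamidoune) to get $\Sigma(\{a_1+H,\ldots,a_t+H\}) = G/H$, then shift by a fixed sum and realize every coset with coefficients $f_i \in \{1,2\}$ on the classes with $|S_i|\ge 3$, which are never collapsed. The only cosmetic difference is your choice $f_i=0$ on the singleton classes versus the paper's $f_i=1$ there; either way those indices contribute $|S_i|-1=0$ to the collapse.
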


\begin{proof}
By Theorem \ref{maintheorem}.1, we have $t \ge \lfloor 2\sqrt{p-2}
\rfloor = \mathsf {cr}(G/H)$ and thus $\Sigma ( \{a_1+H, \ldots,
a_t+H\}) = G/H$. Pick some $x \in G$. Then there exists a nonempty
subset $I \subset [1,t]$ such that
\[
x-(a_1+ \ldots + a_s) + H = \sum_{i \in I} (a_i+H)
\]
and hence
\[
x+H = \sum_{i \in I} 2 (a_i+H) + \sum_{i \in [1,t] \setminus I}
(a_i+H) + \sum_{i=t+1}^{s} (a_i+H) \,. \tag{$**$}
\]
Since $|S_i| \ge 3$ for all $i \in [1,t]$, the representation $(**)$
has collapse $C (**) = 0$.
\end{proof}

\medskip
\begin{lemma}\label{KnMiddleCollapse1}
If $|S_0| \le\lfloor 2\sqrt{q-2} \rfloor - 1$,  then  $G/H$ has a
representation with collapse $C\le 1$.
\end{lemma}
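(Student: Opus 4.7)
The plan is a case split on $t$. If $t\ge\lfloor2\sqrt{p-2}\rfloor$, Lemma~\ref{largetsmallcollapse} already supplies a representation with collapse $0\le 1$, so I focus on $t<\lfloor2\sqrt{p-2}\rfloor$ and construct a representation in $G/H\cong C_p$ with at most one collapsed coefficient, coming from a pair.

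Setting $\bar a_i=a_i+H$, I attach to each index an arithmetic progression of difference $\bar a_i$: for $i\in[1,t]$ the length-$(|S_i|-1)$ AP $A_i=\{\bar a_i,2\bar a_i,\ldots,(|S_i|-1)\bar a_i\}$, corresponding to non-collapsed $f_i\in[1,|S_i|-1]$; for singletons $i\in[t+1,t+r]$ the 2-term AP $A_i=\{0,\bar a_i\}$ (no collapse penalty since $|S_i|=1$); and for pairs $i\in[t+r+1,s]$ the clean singleton $\{\bar a_i\}$ from forcing $f_i=1$. Upgrading a single pair $j^*$ to the 3-term AP $\{0,\bar a_{j^*},2\bar a_{j^*}\}$ (i.e.\ allowing $f_{j^*}\in\{0,1,2\}$) adds at most $1$ to the collapse. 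Taking the union over the choice of $j^*$ (including the option of none), the set of reachable sums in $G/H$ with collapse $\le 1$ becomes
\[
T=F+d+E,\quad F=A_1+\cdots+A_{t+r},\quad d=\sum_{j=t+r+1}^{s}\bar a_j,\quad E=\{0\}\cup\bigcup_{j=t+r+1}^{s}\{\bar a_j,-\bar a_j\},
\]
and the target is $T=G/H$.

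Since the $\bar a_i$ are pairwise distinct and nonzero in $C_p$, the APs making up $F$ have pairwise distinct nonzero differences, so Theorem~\ref{DidVerCD} gives $|F|\ge\min\{p,\,\sum_{i=1}^t(|S_i|-1)+2r-1\}$; and since $p$ is odd, $\bar a_j\ne-\bar a_j$, which forces $|E|\ge u+1$. Applying Theorem~\ref{CDext} to $F+E$ and substituting the identity $\sum_{i=1}^t|S_i|=p+q-2-|S_0|-r-2u$ yields
\[
|T|\ge\min\{p,\,p+q-3-|S_0|+r-u-t\}.
\]
The hypotheses $|S_0|\le\lfloor2\sqrt{q-2}\rfloor-1$, $q>p+\lfloor2\sqrt{p-2}\rfloor+1$, and $t<\lfloor2\sqrt{p-2}\rfloor$ then make this bound reach $p$ in the bulk of the parameter space, delivering $T=G/H$.

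The hard part will be the tight corners where the Cauchy-Davenport estimate on $|F|+|E|$ falls just short of $p+1$, typically when $r$ is small and $u$ is large. There I would sharpen one of the two inputs: when $u$ is large, the $u$ distinct nonzero residues $\bar a_j$ cannot all pair into opposite couples modulo $p$, so $|E|$ is actually much closer to $2u$ than to $u+1$; when $u$ is small, I apply Theorem~\ref{DidVerCD} directly to the enlarged collection of $t+r+1$ APs that already includes the 3-term AP contributed by the dirty pair, strengthening the lower bound on $|F+E|$ by $2$. Both refinements stay within the tool kit of Theorems~\ref{CDext} and~\ref{DidVerCD}.
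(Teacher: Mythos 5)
Your construction is essentially the paper's own: the sets $A_i$ for $i\in[1,t+r]$ coincide with those in the paper's proof, and your $d+E$ is just an enlarged version of the paper's set $D=\{b_0,b_0-b_1,\ldots,b_0-b_u\}$ (you also allow $f_{j^*}=2$), so the estimate you extract from Theorems \ref{DidVerCD} and \ref{CDext}, namely $|T|\ge\min\{p,\,p+q-3-|S_0|+r-u-t\}$, is exactly the paper's inequality $(***)$. The problem is everything after that. Even in the ``bulk'' you only assert that the hypotheses make the bound reach $p$; what is actually needed is to square $q-p+1\le 2\sqrt{q-2}$ and solve the quadratic in $q$ to get $q\le p+1+\lfloor 2\sqrt{p-2}\rfloor$, and this only produces a contradiction when there is at least one unit of slack (i.e. $|S_0|\le\lfloor2\sqrt{q-2}\rfloor-2$, or $s\le p-2$, or $r\ge1$). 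The zero-slack configuration $|S_0|=\lfloor2\sqrt{q-2}\rfloor-1$, $s=p-1$, $r=0$ genuinely occurs and genuinely defeats the generic bound: for $p=7$, $q=13$, $|S_0|=5$, $t=1$, $|S_1|=3$, $u=5$ your displayed minimum equals $6<p$. This is precisely the case your sketch does not handle, and your extra assumption $t<\lfloor2\sqrt{p-2}\rfloor$ does not help, since in that corner $t$ is automatically tiny.

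Neither of your two proposed patches closes this corner. Patch (a) rests on a false premise: for every even $u\le p-1$ the pair-cosets can be chosen as a union of couples $\{c,-c\}$, so $|E|=u+1$ exactly no matter how large $u$ is; the only true gain is $|E|\ge u+2$ when $u$ is odd, a single unit, not ``much closer to $2u$''. Patch (b) beats the generic bound only when $u\le 2$ (the gain is $3-u$), whereas in the problematic corner $u$ is close to $p-1$; and when $u\le2$ one has $t$ large, a case already disposed of by Lemma \ref{largetsmallcollapse}. The missing idea, which is the heart of the paper's proof, is to exploit $|S_i|\ge3$ for $i\in[1,t]$ in the zero-slack case: together with $u+t=p-1$ and $(***)$ this forces $t\le1$, and then one avoids the compounded ``$-1$'' losses of the two-step estimate — for $t=0$ the set $D$ (your $d+E$) already has $u+1=p$ elements, and for $t=1$ a single application of Theorem \ref{CDext} gives $|D+A_1|\ge(p-1)+2-1=p$. (It is true that in these surviving configurations $u$ is either $p-1$ or the odd number $p-2$, so a corrected parity version of your patch (a) would also finish — but identifying that only $t\le1$ survives is exactly the step your proposal lacks.) As it stands, the lemma is unproved in the extremal configuration that makes it delicate.
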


\begin{proof}
 Suppose that
$|S_0| \le\lfloor2\sqrt{q-2}\rfloor-1$. We construct sets  $A_1,
\ldots, A_{t+r}$ and $D$ as follows:

\begin{eqnarray*}
A_{i}&=&\{a_{i}+H,\ldots,(|S_i|-1)a_{i}+H \} \subset G/H \ \ \textrm{for}\  i\in [1, t];\\
A_{i}&=&\{H, a_{i}+H \} \subset G/H \ \ \textrm{for}\ \ i \in [t+1, t+r] ;\\
D&=&\{b_{0},b_{0}-b_{1},b_{0}-b_{2},\ldots,b_{0}-b_{u}\} \subset G/H
\end{eqnarray*}
where $b_{j}=a_{t+r+j}+H$ for $j \in [1, u]$, and $\displaystyle
b_{0}=\sum_{j=1}^{u}b_{j} + H$. We   assert that $D +
\sum_{i=1}^{t+r} A_{i} = G/H$. Clearly, this implies that $G/H$ has
a representation with collapse $C \le 1$.

\smallskip
Assume to the contrary, that ${D}+\sum_{i=1}^{t+r} {A_{i}}
\subsetneq G/H$. Applying the Cauchy-Davenport Theorem and Theorem
\ref{DidVerCD}, we have
\begin{eqnarray*}
|{D}+\sum_{i=1}^{t+r} {A_{i}}|&\ge&| {D}|+|\sum_{i=1}^{t+r} {A_i}|-1\\
&\ge&|D|+\sum_{i=1}^{t+r}|{A_{i}}|-2\\
&=&u+\sum_{i=1}^{t}|S_i|-t+2r-1 \,,
\end{eqnarray*}
and hence
\[
u+\sum_{i=1}^{t}|S_i|-t+2r-1\le p-1 \,. \tag{$***$}
\]
Since by our constructions,
\[
p+q-2= |S_0| + \sum_{i=1}^{t} |S_i|+r+2u \,,
\]
we can solve this equation for $\sum_{i=1}^{t}|S_i|$, yielding
\begin{eqnarray*}
u+(p+q-2-|S_0|-r-2u)-t+2r-1&\le&p-1\\
q-u-t+r&\le&|S_0|+2\\
q-(u+t+r)+2r&\le& |S_0|+2.\\
\end{eqnarray*}
Therefore we have
\[
q- (u+t+r) + (2r-1) \le |S_0|+1 \,.
\]
We distinguish two cases.

\smallskip
\noindent CASE 1: \, $|S_0|\le \lfloor2\sqrt{q-2}\rfloor-2$ or
 $s \le p-2$ or $r \ge 1$.

Using $u+t+r=s\le p-1$ and the assumption of CASE 1  we obtain
\[
q-p+1\le\lfloor2\sqrt{q-2}\rfloor \,.
\]
Here, since $q-p+1$ is
positive, squaring both sides preserves the inequality, giving us
\begin{eqnarray*}
(q-p)^{2}+2(q-p)+1&\le&4(q-2)\\
q^{2}-2pq+p^{2}+2q-2p+1&\le&4q-8\\
q^{2}-2pq-2q+p^{2}-2p+9&\le&0\\
q^{2}-(2p+2)q+(p^{2}-2p+9)&\le&0.
\end{eqnarray*}
By considering this as a quadratic in terms of $q$, we can apply the
quadratic formula to find that
\[
q\le p+1+2\sqrt{p-2} \,.
\]
Since $q$ and $p+1$ are integers, we then have
\[
q\le p+1+\lfloor2\sqrt{p-2}\rfloor \,,
\]
which is a contradiction to the original restrictions of
$p+\lfloor2\sqrt{p-2}\rfloor+1<q<2p$.

\medskip
\noindent CASE 2: \,  $|S_0| = \lfloor2\sqrt{q-2}\rfloor-1$, $s=p-1$
and $r=0$.

Using $p-1=s=t+r+u=t+u$ and $(***)$ we obtain that
\[
p-1+(t-1) = u+2t-1 \le u  + \sum_{i=1}^t |S_i| - t - 1 \le p-1 \,,
\]
whence $t \le 1$.

If $t = 0$, then $p = u+1 = |D|$ and hence $D = G/H$, a
contradiction.

If  $t=1$, then  $u=p-2$ and (looking back at the beginning of the
proof) we get
\[
p-1 \ge |{D}+\sum_{i=1}^{t+r} {A_{i}}| = |D + A_1| \ge |D| + |A_1| -
1 = (u+1) + (|S_1|-1) - 1 = u + |S_1| - 1 \,,
\]
and hence $|S_1| \le 2$, a contradiction.
\end{proof}

\smallskip
We require the following technical Lemma.

\medskip
\begin{lemma}\label{newLemma}
Suppose that $G/H$ has a representation with collapse $C \in \mathbb
N_0$. If $(p+q-2)+  \max\{1,|S_0|-1\}  -C-s\ge q$, then for every $x
\in G$ with $C (*) \le C$ we have $|\left( \Sigma (S_0) \cup \{0\}
\right) + \sum_{i=1}^{s} \Sigma_{f_i} (S_i)| \ge q$.
\end{lemma}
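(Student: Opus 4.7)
The plan is to bound each factor of the sumset inside the cyclic group $H$ of prime order $q$ (after translating every $\Sigma_{f_i}(S_i)$ by $-f_i a_i$ so that it sits inside $H$) and then combine the bounds by a single application of the Cauchy--Davenport theorem.

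First, for each non-collapsed index $i$, i.e.\ $f_i \in [1,|S_i|-1]$, Theorem~\ref{DdSHam} applied in $H$ yields
\[
|\Sigma_{f_i}(S_i)| \ge \min\{q,\, f_i(|S_i|-f_i)+1\} \ge \min\{q,\, |S_i|\}
\]
by the comment following that theorem, while for each collapsed index one has $|\Sigma_{f_i}(S_i)| = 1$. Since a collapsed index loses exactly $|S_i|-1$ compared with the non-collapsed bound and contributes exactly $|S_i|-1$ to the collapse, summing over $i$ gives $\sum_{i=1}^s |\Sigma_{f_i}(S_i)| \ge \sum_{i=1}^s |S_i| - C(*)$, and of course $C(*) \le C$.

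Next, the key identity $\Sigma(S_0)\cup\{0\} = \sum_{a\in S_0}\{0,a\}$ presents this set as a sumset in $H$ of two-element arithmetic progressions whose differences are the elements of $S_0$, hence pairwise distinct and nonzero. When $|S_0|\ge 2$ the hypotheses of Diderrich's theorem (Theorem~\ref{DidVerCD}) are met in $H$, giving $|\Sigma(S_0)\cup\{0\}| \ge \min\{q,\, 2|S_0|-1\}$; for $|S_0|\le 1$ the trivial bound $|\Sigma(S_0)\cup\{0\}| \ge |S_0|+1$ is used. In every case,
\[
|\Sigma(S_0)\cup\{0\}| \ge \min\bigl\{q,\, |S_0| + \max\{1,|S_0|-1\}\bigr\}.
\]

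Finally, I would apply Cauchy--Davenport (Theorem~\ref{CDext}) inside $H$ to the $s+1$ nonempty summands $\Sigma(S_0)\cup\{0\}$ and the translates of the $\Sigma_{f_i}(S_i)$, and use the identity $|S_0| + \sum_{i=1}^s |S_i| = |S| = p+q-2$ recorded in the setup to conclude that
\[
\Bigl|\bigl(\Sigma(S_0)\cup\{0\}\bigr) + \sum_{i=1}^s \Sigma_{f_i}(S_i)\Bigr| \ge \min\bigl\{q,\, (p+q-2) + \max\{1,|S_0|-1\} - C - s\bigr\},
\]
which is $\ge q$ by hypothesis. The argument is essentially bookkeeping once the tools are chosen; the only mildly delicate point is recognising that the extra term $\max\{1,|S_0|-1\}$ in the hypothesis is precisely what Diderrich's theorem (and not plain Cauchy--Davenport) contributes to the bound on $|\Sigma(S_0)\cup\{0\}|$ when $|S_0|\ge 2$, which is why the estimate matches the hypothesis on the nose.
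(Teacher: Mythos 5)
Your proposal is correct and follows essentially the same route as the paper: the Dias da Silva--Hamidoune bound $|\Sigma_{f_i}(S_i)|\ge |S_i|$ at non-collapsed coefficients, Diderrich's Theorem~\ref{DidVerCD} applied to the two-element progressions $\{0,a\}$, $a\in S_0$, to gain the term $\max\{1,|S_0|-1\}$, and a single Cauchy--Davenport application producing the stated minimum, which the hypothesis forces to be $q$. The only (immaterial) difference is that you work inside the order-$q$ subgroup $H$ after translating by $-f_ia_i$, whereas the paper reduces modulo the subgroup $K$ of index $q$ and carries out the identical estimates in $G/K\cong H$.
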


\begin{proof}
For any subset $A \subset G$ we set $\bar{ A} = \{a+K \mid a \in A\}
\subset G/K$ where $K\subset G$ is the subgroup with $(G \DP K)=q$.
Clearly we have $\overline{\Sigma_k (A)} = \Sigma_k (\overline A)$
for all $k \in \mathbb N_0$, $|A| \ge |\overline A|$ and if $x \in
G$ and $A \subset x+H$, then $|A| = |\overline A|$. If $|\Sigma (
\overline{S_0}) \cup \{\overline 0\}| \ge q$, then the statement of
the Lemma follows. Suppose that $|\Sigma ( \overline{S_0} \cup
\{\overline 0\})| < q$.

We assert that $|  \Sigma (S_0) \cup \{0\} |\ge |S_0|+
\max\{1,|S_0|-1\}$. If $|S_0| \le 1$, then this is clear. Suppose
that $\overline{S_0} = \{z_1+K, \ldots, z_{\lambda}+K\}$ with $z_1,
\ldots, z_{\lambda} \in G$ and $\lambda \ge 2$. Then $\Sigma (
\overline{S_0}) \cup \{\overline 0\} = \{\overline 0, z_1+K\} +
\ldots + \{\overline 0, z_{\lambda} + K \}$, and Theorem
\ref{DidVerCD} implies that
\[
|  \Sigma (S_0) \cup \{0\} | \ge |\Sigma ( \overline{S_0}) \cup
\{\overline 0\}| \ge \min \{q, 2 \lambda - 1 \} = 2 |S_0| - 1 \,.
\]
Let $x \in G$ with representation $(*)$ and let $i \in [1,s]$. If
$f_i$ is a collapsed coefficient, then  $|\Sigma_{f_i} (S_i)|=
|\Sigma_{f_i} (\overline{S_i})|= 1$. If $f_{i}$ is not a collapsed
coefficient, then the observation after Theorem \ref{DdSHam} gives
us
\[
|\Sigma_{f_i} (S_i)| \ge |\Sigma_{f_i} (\overline{S_i)}| \ge
|\overline{S_{i}}| = |S_{i}| \,.
\]
Thus we obtain
\[
| \Sigma (\overline{S_0}) \cup \{\overline 0\}| +
 \sum_{i=1}^{s}| \Sigma_{f_i} (\overline{S_i})| \ge \sum_{i=0}^{s} |S_i| + \max\{1,|S_0|-1\}
 -C \,,
\]
and by Theorem \ref{CDext} we have
\[
\begin{aligned}
|\left( \Sigma (S_0) \cup \{0\} \right) + \sum_{i=1}^{s}
\Sigma_{f_i} (S_i)| & \ge |\left( \Sigma (\overline{S_0}) \cup
\{\overline 0\} \right) +
\sum_{i=1}^{s} \Sigma_{f_i} (\overline{S_i})| \\
& \ge  \min \{q, |\Sigma (\overline{S_0})
\cup \{\overline 0\} | + \sum_{i=1}^{s}|\Sigma_{f_i} (\overline{S_i})|-s \} \\
& \ge  \min \{q, \sum_{i=0}^{s}|S_i|+ \max\{1,|S_0|-1\} -C-s \} \\
& =  \min \{q, p+q-2+ \max\{1,|S_0|-1\}  -C-s \} \,.
\end{aligned}
\]
So if $p+q-2+ \max\{1,|S_0|-1\}  -C-s\ge q$, then the assertion
follows.
\end{proof}

\medskip
\begin{proposition}\label{KnMiddle}
If $3\le |S_0| \le \lfloor2\sqrt{q-2}\rfloor - 1$, then $\Sigma (S)
= G$.
\end{proposition}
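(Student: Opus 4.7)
The plan is to execute the general two-step strategy outlined in Section 3, feeding the hypothesis directly into Lemmas \ref{KnMiddleCollapse1} and \ref{newLemma}. Since $|S_0| \le \lfloor 2\sqrt{q-2}\rfloor - 1$ is exactly the hypothesis of Lemma \ref{KnMiddleCollapse1}, that lemma supplies a representation of $G/H$ with collapse $C \le 1$, completing step~1 of the outline.

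For step~2 I would fix an arbitrary $x \in G$ and a representation $(*)$ with $C(*) \le 1$, and then verify the numerical hypothesis of Lemma \ref{newLemma} for this configuration. Because $|S_0| \ge 3$ we have $\max\{1, |S_0|-1\} = |S_0|-1 \ge 2$, and combining this with $C(*) \le 1$ and $s \le p-1$ yields
\[
(p+q-2) + \max\{1,|S_0|-1\} - C(*) - s \;\ge\; (p+q-2) + 2 - 1 - (p-1) \;=\; q .
\]
Lemma \ref{newLemma} then gives
\[
\bigl| \bigl( \Sigma(S_0) \cup \{0\} \bigr) + \Sigma_{f_{1}}(S_{1}) + \ldots + \Sigma_{f_{s}}(S_{s}) \bigr| \;\ge\; q .
\]

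To conclude, this sumset is contained in the coset $x+H$, which has cardinality exactly $q$, so the inclusion must be an equality. Hence $x+H \subset \Sigma(S)$, and since $x$ was arbitrary, $\Sigma(S)=G$. The argument is essentially a direct concatenation of the preparatory lemmas, so I do not anticipate a real obstacle; the only delicate point is that the numerical margin in Lemma \ref{newLemma} is tight in the extremal configuration $|S_0|=3$, $C=1$, $s=p-1$, which explains why $|S_0| \ge 3$ is exactly the correct lower bound treated by this proposition (the case $|S_0| \le 2$ would need a sharper collapse bound and will presumably be handled separately).
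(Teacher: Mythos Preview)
Your proof is correct and follows exactly the same route as the paper: invoke Lemma \ref{KnMiddleCollapse1} to obtain collapse $C\le 1$, use $|S_0|\ge 3$ to get $\max\{1,|S_0|-1\}\ge 2$, verify the numerical hypothesis of Lemma \ref{newLemma} via $(p+q-2)+2-1-(p-1)=q$, and then conclude with the coset argument from Section \ref{3}. The only cosmetic difference is that you spell out the final containment $x+H\subset\Sigma(S)$ explicitly, whereas the paper leaves that to the general strategy.
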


\begin{proof}
Since $|S_0| \le  \lfloor2\sqrt{q-2}\rfloor - 1$, Lemma
\ref{KnMiddleCollapse1}  gives us a representation of $G/H$ with
collapse $C\le1$. Notice that since $|S_0| \ge 3$, we have $p+q-2+
\max\{1,|S_0|-1\} -C-s\ge p+q-2+2-1-(p-1)=q$. Thus the assertion
follows from Lemma \ref{newLemma}.
\end{proof}

\smallskip
Now consider the case $|S_0| \le 2$, we contemplate two subcases.
First take the case where $|S_1| \le3$.

\medskip
\begin{proposition}\label{KnSmallAllSm}
If $|S_0| \le 2$ and $|S_1| \le3$, then $\Sigma (S) = G$.
\end{proposition}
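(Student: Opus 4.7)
The plan is to execute the two-step strategy outlined in Section \ref{3}. Since $|S_0| \le 2 \le \lfloor 2\sqrt{q-2}\rfloor - 1$ (which holds because $q \ge 13$), Lemma \ref{KnMiddleCollapse1} already provides a representation of $G/H$ with collapse $C \le 1$. With $|S_0| \le 2$ we have $\max\{1,|S_0|-1\} = 1$, so the hypothesis of Lemma \ref{newLemma} reduces to $s \le p - 1 - C$.

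If $s \le p-2$, this inequality holds even when $C = 1$, and Lemma \ref{newLemma} combined with the computation at the end of Section \ref{3} immediately yields $\Sigma(S) = G$. The crux is therefore the tight case $s = p-1$, where the collapse bound $C \le 1$ is insufficient by exactly one unit. I must improve the collapse bound to $C = 0$ by showing $t \ge \lfloor 2\sqrt{p-2}\rfloor$ and then applying Lemma \ref{largetsmallcollapse}.

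Here the assumption $|S_1| \le 3$ becomes decisive: since $|S_1| \ge \cdots \ge |S_t| \ge 3$, each $|S_i|$ equals exactly $3$, so $\sum_{i=1}^{t}|S_i| = 3t$ (trivially correct when $t = 0$ as well). Plugging this into $|S_0| + \sum_{i=1}^{t}|S_i| + r + 2u = p+q-2$ together with $t+r+u = p-1$ and eliminating $u$ yields $r = t - (q - p - |S_0|)$. Nonnegativity of $r$ forces $t \ge q - p - |S_0| \ge q - p - 2$, and the hypothesis $q > p + \lfloor 2\sqrt{p-2}\rfloor + 1$ (with $p,q \in \mathbb Z$) gives $q - p - 2 \ge \lfloor 2\sqrt{p-2}\rfloor$. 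Therefore $t \ge \lfloor 2\sqrt{p-2}\rfloor$, Lemma \ref{largetsmallcollapse} yields a representation of $G/H$ with $C = 0$, and Lemma \ref{newLemma} finishes the proof.

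The main obstacle is recognizing that the rigidity forced by $|S_1| \le 3$ together with saturation $s = p-1$ constrains the counts tightly enough for the gap $q - p > \lfloor 2\sqrt{p-2}\rfloor + 1$ to push $t$ above the Dias da Silva--Hamidoune threshold. Without exploiting this structural rigidity, the argument stalls at collapse $C = 1$ with a one-unit deficit in Lemma \ref{newLemma}; so the whole proof hinges on converting the assumption $|S_1| \le 3$ into the explicit identity $\sum_{i=1}^t |S_i| = 3t$ and then into a lower bound on $t$.
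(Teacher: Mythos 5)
Your proof is correct and follows essentially the same route as the paper: reduce to the tight case $s=p-1$ via Lemma \ref{KnMiddleCollapse1} and Lemma \ref{newLemma}, use $|S_1|\le 3$ to force $t\ge q-p-2\ge\lfloor 2\sqrt{p-2}\rfloor$, and then invoke Lemma \ref{largetsmallcollapse} to get collapse $C=0$. Your bookkeeping via the exact identity $\sum_{i=1}^{t}|S_i|=3t$ and nonnegativity of $r$ is only a cosmetic variant of the paper's inequality $p+q-2\le 2+3t+r+2u$.
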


\begin{proof}
Since $q \ge 5$, we have  $|S_0| \le2<\lfloor2\sqrt{q-2}\rfloor$.
Thus Lemma \ref{KnMiddleCollapse1} implies that  there is a
representation of $G/H$ with collapse $C\le1$. Thus it remains to
verify the assumption of Lemma \ref{newLemma}, and thus we have to
show that
\[
(p+q-2)+ \max\{1,|S_0|-1\} -C-s\ge q \,.
\]
Note that $|S_0| \le2$ implies that $\max\{1,|S_0|-1\} = 1$. We have
$(p+q-2)+ \max\{1,|S_0|-1\} -C-s\ge q$ for $s \le p-2$. Consider the
case $s=p-1$. Since
\[
\begin{aligned}
p+q-2&=|S_0|+\sum_{i=1}^{t}|S_i|+r+2u\\
&\le2+3t+r+2u\\
&=2+s+2t+u\\
&=p+1+2t+u \,,
\end{aligned}
\]
it follows that
\[
\begin{aligned}
q-2 &\le  1 + 2t + u \\
&= 1 + t + (t+u+r) -r\\
&= 1+t+(p-1)-r \\
&= t+p-r.
\end{aligned}
\]
Since $q \ge p + \lfloor 2 \sqrt{p-2} \rfloor +2$, we see that
\begin{eqnarray*}
t &\ge& q-p-2 \\
&\ge& \lfloor 2\sqrt{p-2} \rfloor.
\end{eqnarray*}

Consequently, Lemma \ref{largetsmallcollapse} implies that we have
collapse $C=0$. Putting all together we obtain
\[
(p+q-2)+ \max\{1,|S_0|-1\}  -C-s\ge (p+q-2)+1-0-(p-1)=q \,,
\]
and hence the assumption of Lemma \ref{newLemma} is satisfied.
\end{proof}

\medskip
Finally, we address the remaining case where $|S_0| \le2$ and $|S_1|
\ge 4$.

\medskip
\begin{lemma}\label{KnSmallCollapse1}
If $|S_0| \le 2$ and $|S_1| \ge 4$, then for every $x \in G$ there
is a representation $(*)$ of $x+H$ with  $f_1 \in [2, |S_1| -2]$.
\end{lemma}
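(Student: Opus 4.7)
The plan is to fix $x \in G$ and recast the existence of a representation $(*)$ with $f_1 \in [2, |S_1|-2]$ as a sumset-covering problem in $G/H \cong C_p$. I define
\[
A_1 = \{ k(a_1+H) \mid k \in [2, |S_1|-2] \} \quad \text{and} \quad A_i' = \{ k(a_i+H) \mid k \in [0, |S_i|] \} \ \ (i \in [2,s]),
\]
so that $x+H$ has a representation with $f_1 \in [2, |S_1|-2]$ if and only if $x+H \in A_1 + A_2' + \cdots + A_s'$. The positivity requirement $f_1 + \cdots + f_s > 0$ is automatic from $f_1 \ge 2$, so it suffices to prove $A_1 + A_2' + \cdots + A_s' = G/H$.

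The core step is an application of the Cauchy-Davenport Theorem~\ref{CDext}. In the generic case where no summand wraps around modulo $p$, we have $|A_1| = |S_1|-3 \ge 1$ and $|A_i'| = |S_i|+1$, recalling $|S_i| = 1$ for $i \in [t+1, t+r]$ and $|S_i| = 2$ for $i \in [t+r+1, s]$. Cauchy-Davenport then gives
\[
|A_1 + A_2' + \cdots + A_s'| \ge \min \Bigl\{ p, \, |A_1| + \sum_{i=2}^s |A_i'| - s + 1 \Bigr\}.
\]
Using $s = t+r+u$ together with $|S_0| + \sum_{i=1}^t |S_i| + r + 2u = p+q-2$, I expect the inner expression to simplify to $p+q-5-|S_0|$. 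Since $p \ge 7$ and the hypothesis $q > p+\lfloor 2\sqrt{p-2}\rfloor+1$ forces $q \ge 13$, while $|S_0| \le 2$, this quantity exceeds $p$. The Cauchy-Davenport bound is therefore $p$, so $A_1 + A_2' + \cdots + A_s' = G/H$.

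The main obstacle I anticipate is careful bookkeeping around the degenerate configurations. First, the identifications $|A_1| = |S_1|-3$ and $|A_i'| = |S_i|+1$ are valid only when the relevant progression does not wrap around modulo $p$; if any single $A_j$ already equals $G/H$ (which happens only when $|S_1| \ge p+3$ or some $|S_i| \ge p-1$), the sumset is trivially $G/H$ because every $A_j$ is nonempty. Second, Cauchy-Davenport is stated for at least two summands, so I must handle $s \le 1$ separately: $s=0$ is impossible since it would force $|S_0| = p+q-2 > 2$, and for $s=1$ one has $|S_1| = p+q-2-|S_0| \ge p+q-4 \ge p+3$, whence $A_1 = G/H$ on its own. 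Once these edge cases are dispatched, the uniform Cauchy-Davenport estimate closes the argument.
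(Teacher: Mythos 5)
Your proof of the literal statement is correct: enlarging the coefficient ranges to $f_i\in[0,|S_i|]$ for $i\ge 2$ makes each factor $A_i'$ have $|S_i|+1$ elements, which exactly offsets the loss of $s-1$ in Cauchy--Davenport, and the resulting lower bound $p+q-5-|S_0|\ge p+q-7>p$ (using $q\ge 13$ and $|S_0|\le 2$) gives $A_1+A_2'+\cdots+A_s'=G/H$; your treatment of the wrap-around cases and of $s\le 1$ is also fine. It is, however, a genuinely different and strictly weaker route than the paper's. The paper keeps the restricted ranges $f_i\in[1,|S_i|-1]$ for the classes with $|S_i|\ge 3$, uses $\{H,a_i+H\}$ for the singleton classes and the shifted set $D=\{b_0,b_0-b_1,\ldots,b_0-b_u\}$ for the two-element classes, and precisely because these sets are smaller it must invoke Diderrich's Theorem \ref{DidVerCD} (plain Cauchy--Davenport would not reach $p$ there). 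What that extra care buys is that the representation produced has, besides $f_1\in[2,|S_1|-2]$, collapse at most $1$ on the coordinates $i\in[2,s]$: at most one coefficient is collapsed, and only for a class of size $2$. This stronger property is not in the statement of the lemma, but it is exactly what the proof of Proposition \ref{KnSmallSomeBig} uses when it estimates $\sum_{i=2}^{s}|\Sigma_{f_i}(\overline{S_i})|\ge\sum_{i=2}^{s}|S_i|-1$ (``we subtract one for a possible collapsed coefficient''). A representation coming from your argument may have $f_i\in\{0,|S_i|\}$ for many indices $i$ with $|S_i|$ large, in which case that count fails; so your proof establishes the lemma as stated, but it could not be substituted for the paper's proof without also reworking Proposition \ref{KnSmallSomeBig}.
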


\begin{proof}
We argue as in Lemma \ref{KnMiddleCollapse1}.
 Suppose that
$|S_0| \le 2$ and $|S_1| \ge 4$. We construct sets  $A_1, \ldots,
A_{t+r}$ and $D$ as follows:

\begin{eqnarray*}
A_{1}&=&\{2a_{1}+H,\ldots,(|S_1|-2)a_{1}+H\} \subset G/H \,, \\
A_{i}&=&\{a_{i}+H,\ldots,(|S_i|-1)a_{i}+H \} \subset G/H \ \ \textrm{for}\  i\in [2, t];\\
A_{i}&=&\{H, a_{i}+H \} \subset G/H \ \ \textrm{for}\ \ i \in [t+1, t+r] ;\\
D&=&\{b_{0},b_{0}-b_{1},b_{0}-b_{2},\ldots,b_{0}-b_{u}\} \subset G/H
\end{eqnarray*}
where $b_{j}=a_{t+r+j}+H$ for $j \in [1, u]$, and $\displaystyle
b_{0}=\sum_{j=1}^{u}b_{j} + H$. It suffices to show that $D +
\sum_{i=1}^{t+r} A_{i} = G/H$. Applying the Cauchy-Davenport Theorem
and Theorem \ref{DidVerCD}, we have
\begin{eqnarray*}
|{D}+\sum_{i=1}^{t+r} {A_{i}}|&\ge& \min \{p, | {D}|+|\sum_{i=1}^{t+r} {A_i}|-1 \} \\
&\ge& \min \{p, |D|+\sum_{i=1}^{t+r}|{A_{i}}|-2 \} \\
&=& \min \{p, u+\sum_{i=1}^{t}|S_i|-t+2r-3 \} \,.
\end{eqnarray*}
Recall that $p+q-2=|S_0|+\sum_{i=1}^{t}|S_i|+r+2u$ implies $
\sum_{i=1}^{t}|S_i|=p+q-2-|S_0|-r-2u$. Now we have
\begin{eqnarray*}
u+\sum_{i=1}^{t}|S_{i}|-t+2r-3&=&u+(p+q-2-|S_0|-r-2u)-t+2r-3 \\
&=&p+q-5-|S_0|+r-u-t\\
&=&p+q-5-|S_0|+r-(s-r)\\
&=&p+q-5-|S_0|+2r-s.\\
\end{eqnarray*}
Since $s\le p-1$, we see that
\begin{eqnarray*}
p+q-5-|S_0|+2r-s&\ge&p+q-5-|S_0|+2r-p+1\\
&=&q-4-|S_0|+2r\\
&\ge&p+\lfloor2\sqrt{p-2}\rfloor-2-|S_0|+2r
\end{eqnarray*}
for the given values of primes $p,q$.  This gives us
\begin{eqnarray*}
u+\sum_{i=1}^{t}|S_{i}|-t+2r-3& \ge &p+\lfloor2\sqrt{p-2}\rfloor-2-|S_0|+2r\\
&\ge&p+\lfloor2\sqrt{p-2}\rfloor-4+2r\\
&\ge&p+\lfloor2\sqrt{p-2}\rfloor-4\\
&\ge&p \,.
\end{eqnarray*}
\end{proof}

\medskip
\begin{proposition}\label{KnSmallSomeBig}
If $|S_0| \le 2$ and $|S_1| \ge 4$, then $\Sigma (S) = G$.
\end{proposition}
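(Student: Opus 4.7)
The plan is to apply Lemma \ref{KnSmallCollapse1} and then sharpen the estimate from the proof of Lemma \ref{newLemma} by exploiting the extra flexibility that $f_{1} \in [2, |S_{1}|-2]$ provides at the single index $1$.

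For each $x \in G$, I would first fix a representation $(*)$ of $x+H$ with $f_{1} \in [2, |S_{1}|-2]$, as produced by Lemma \ref{KnSmallCollapse1}. A direct inspection of the construction of the sets $A_{i}$ and $D$ in that lemma shows that $(*)$ has collapse $C \le 1$: the coefficients $f_{1}, \ldots, f_{t}$ lie in interior ranges (not collapsed); the indices $i \in [t+1, t+r]$ have $|S_{i}|-1=0$; and the remaining indices $i \in [t+r+1, t+r+u]$ with $|S_{i}|=2$ contribute at most $1$ to the collapse, depending on whether $b_{0}$ or one of the $b_{0}-b_{j}$ is picked from $D$ (the latter choice forces exactly one such $f_{i}$ to be $0$).

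The key improvement is at index $1$: since $f_{1} \in [2, |S_{1}|-2]$ and $|S_{1}| \ge 4$, Theorem \ref{DdSHam} yields
\[
|\Sigma_{f_{1}}(\overline{S_{1}})| \ge \min\{q, f_{1}(|S_{1}|-f_{1})+1\} \ge \min\{q, 2|S_{1}|-3\} \ge \min\{q, |S_{1}|+1\},
\]
which beats the generic bound $|\Sigma_{f_{1}}(\overline{S_{1}})| \ge |S_{1}|$ used in Lemma \ref{newLemma} by at least $1$ (in the edge case $|S_{1}|=q$ one has $\overline{S_{1}} = G/K$ and $\Sigma_{f_{1}}(\overline{S_{1}})=G/K$, giving the conclusion immediately). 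Repeating the computation in the proof of Lemma \ref{newLemma} with this boost, together with $|\Sigma(\overline{S_{0}}) \cup \{\overline{0}\}| \ge |S_{0}|+1$ (valid since $|S_{0}| \le 2$) and $\sum_{i=0}^{s}|S_{i}| = p+q-2$, gives
\[
|\Sigma(\overline{S_{0}}) \cup \{\overline{0}\}| + \sum_{i=1}^{s} |\Sigma_{f_{i}}(\overline{S_{i}})| \ge p+q-C,
\]
whence Cauchy-Davenport across the $s+1$ summands in $G/K$ produces a Minkowski sum of size at least $\min\{q, p+q-C-s\}$. Since $C \le 1$ and $s \le p-1$, this minimum equals $q$, and the argument from Section \ref{3} forces $\bigl(\Sigma(S_{0}) \cup \{0\}\bigr) + \sum_{i=1}^{s}\Sigma_{f_{i}}(S_{i}) = x+H$, so $x \in \Sigma(S)$.

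The delicate point is the tightness of this bookkeeping: the $+1$ gain at index $1$ must exactly absorb the single possible unit of collapse in the extremal case $s=p-1$. A representation with $C \ge 2$, or the absence of the boost at index $1$, would break the estimate, which is precisely why Lemma \ref{KnSmallCollapse1} (rather than just Lemma \ref{KnMiddleCollapse1}) is needed here.
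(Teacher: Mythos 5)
Your proposal is correct and follows essentially the same route as the paper: invoke Lemma \ref{KnSmallCollapse1}, use the Dias da Silva--Hamidoune bound with $f_1 \in [2,|S_1|-2]$ to gain $|\Sigma_{f_1}(\overline{S_1})| \ge |S_1|+1$, bound $|\Sigma(\overline{S_0}) \cup \{\overline 0\}| \ge |S_0|+1$, allow one unit of collapse from the construction, and finish with Cauchy--Davenport to reach $\min\{q,\, p+q-C-s\} = q$. Your explicit verification that the representation produced in Lemma \ref{KnSmallCollapse1} has collapse $C \le 1$ is exactly the point the paper handles implicitly by subtracting one for a single possible collapsed coefficient among $i \in [2,s]$.
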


\begin{proof}
By Lemma \ref{KnSmallCollapse1} it remains to show that
\[
|\left( \Sigma (S_0) \cup \{0\} \right) + \sum_{i=1}^{s}
\Sigma_{f_i} (S)| \ge q \,.
\]
As in the proof of Lemma \ref{newLemma}, we set, for any subset $A
\subset G$, $\bar{ A} = \{a+K \mid a \in A\} \subset G/K$ , and we
use all observations made before.  Theorem \ref{CDext} implies that
\[
|\left( \Sigma (\overline{S_0}) \cup \{\overline 0\} \right) +
\sum_{i=1}^{s} \Sigma_{f_i} (\overline{S_i})|
     \ge \min \{q, |\Sigma (\overline{S_0}) \cup \{\overline 0\}|+|\Sigma_{f_1}
(\overline{S_1})|+\sum_{i=2}^{s}|\Sigma_{f_i} (\overline{S_i})|-s \}
\,.
\]
By Lemma \ref{KnSmallCollapse1}  we have $f_1 \in
 [2, |S_1|-2]$, and by Theorem \ref{DdSHam} we get
\[
|\Sigma_{f_1}(\bar{{S_{1}}})| \ge \min\{q,\ f_1 |S_1|- f_1^{2} + 1\}
\,.
\]
To find a lower bound on this inequality, we consider the minimum
value of the quadratic expression $f_1 |S_1| - f_1^{2} + 1$ over the
interval $[2, |S_1|-2]$.  Since the leading term is negative, the
minimum value will occur when $f_1=2$ or $f_1=|S_1|-2$.  Hence $f_1
|S_1|- f_1^{2} + 1 \ge  2 |S_1| - 3 \ge |S_1|+1$ because $|S_1| \ge
4$. Now we have
\[
\begin{aligned}
|\left( \Sigma (S_0) \cup \{0\} \right) + \sum_{i=1}^{s}
\Sigma_{f_i} (S)| &\ge |\left( \Sigma (\overline{S_0}) \cup
\{\overline 0\} \right) +
\sum_{i=1}^{s} \Sigma_{f_i} (\overline{S_i})| \\
&\ge \min \{q, |\Sigma (\overline{S_0}) \cup \{\overline
0\}|+|\Sigma_{f_1}
(\overline{S_1})|+\sum_{i=2}^{s}|\Sigma_{f_i} (\overline{S_i})|-s \} \\
&\ge \min \{q, |\Sigma (S_0) \cup
\{0\}|+(|S_1|+1)+\sum_{i=2}^{s}|S_i|-1-s \} \,,
\end{aligned}
\]
where we subtract one for a possible collapsed coefficient yielding
$\Sigma_{f_i} (S_i) =\{0\}$ for some $i \in [2, s]$.  Therefore we
obtain that
\[
\begin{aligned}
|\left( \Sigma (S_0) \cup \{0\} \right) + \sum_{i=1}^{s}
\Sigma_{f_i} (S)| & \ge \min \{q, (|S_0|+ \max\{1,|S_0|-1\}  +(|S_1|+1)+\sum_{i=2}^{s}|S_i|-1-s \} \\
& = \min \{q, p+q-2+ \max\{1,|S_0|-1\} -s \} \\
& \ge  \min \{q,  p+q-2+1-(p-1) \} \\
& =  q \,. 
\end{aligned}
\]
\end{proof}

\medskip
Now the proof of Theorem \ref{originalResult} follows by a simple
combination of the previous propositions.

\medskip
\begin{proof}[Proof of Theorem \ref{originalResult}]
Let $G$ be  cyclic  of order $pq$ where $p,q$ are primes with
$p+\lfloor2\sqrt{p-2}\rfloor+1<q<2p,$ and let $S \subset G \setminus
\{0\}$ be a subset with $|S|=p+q-2$. We use all notations as
introduced at the beginning of Section \ref{3}.

If $|S_0| \ge \lfloor 2\sqrt{q-2} \rfloor$, then Proposition
\ref{KnBig} implies that $\Sigma (S) = G$.

If $3 \le |S_0| \le \lfloor 2\sqrt{q-2} \rfloor - 1$, then
Proposition \ref{KnMiddle} yields that $\Sigma (S) = G$.

Consider now the case $|S_0| \le 2$.  If additionally we have $|S_1|
\le 3$, then Proposition \ref{KnSmallAllSm} yields that $\Sigma (S)
= G$. On the other hand, if $|S_1| \ge 4$, then Proposition
\ref{KnSmallSomeBig} yields that $\Sigma (S) = G$.
\end{proof}

\bigskip
{\bf Acknowledgement} This work was supported by the Austrian
Science Fund FWF, Project No. P18779-N13.



\end{document}